\newtheorem{theorem}{Theorem}
\newtheorem{corollary}[theorem]{Corollary}
\newtheorem{proposition}[theorem]{Proposition}
\author{Caroline L. Wormell\thanks{Mathematical Sciences Institute, The Australian National University. email: {\sf caroline.wormell@anu.edu.au}}
	}
\title{Highly accurate and fine-scale estimation of equilibrium measures}
\newcommand{\C}{\mathbb{C}}
\newcommand{\E}{\mathcal{E}}
\renewcommand{\H}{\mathcal{H}}
\newcommand{\B}{\mathcal{B}}
\renewcommand{\L}{\mathcal{L}}
\renewcommand{\P}{\mathcal{P}}
\renewcommand{\d}{\mathrm{d}}
\newcommand{\bE}{\mathbb{E}}
\newcommand{\bP}{\mathbb{P}}
\newcommand{\h}{h}
\renewcommand{\j}{^{j}}
\DeclareMathOperator{\Lip}{Lip}
\begin{document}
	\maketitle
	\begin{abstract}
		Equilibrium measures are special invariant measures of chaotic dynamical systems and iterated function systems, commonly studied as salient examples of fractal measures. While useful analytic expressions are rare, computational exploration of these measures can yield useful insight, in particular in studying their Fourier decay. In this note we present simple, efficient computational methods to obtain weak estimates of equilibrium and related measures (i.e. as integrals against smooth functions) at high spatial resolution. These methods proceed via Chebyshev-Lagrange approximation of the transfer operator. One method, which estimates measures directly from spectral data, gives exponentially accurate estimates at spatial scales larger than the approximation's resolution. Another, method, which generates random point samples, has a Central Limit Theorem-style accuracy down to an exponentially small spatial resolution. This means that these measures and their Fourier decay can be studied very accurately, and at very high Fourier frequencies.
	\end{abstract}

	Iterated function systems (IFS) are families of contractions that together generate fractal sets. They have a dual notion in dynamics of full-branch expanding maps, chaotic systems that ``blow up'' small-scale behaviour to large scales in a procedural, self-similar fashion. These two notions are used in many contexts in pure mathematics: as well as genearting many important and interesting fractals, they provide various encodings of number-theoretical interest such as continued fractions. Key statistical and geometric properties of these systems such as invariant measures, periodic orbit counts and Hausdorff dimension of invariant sets are often studied by means of the so-called {\it thermodynamic formalism} \cite{Ruelle04}. This framework, rooted in an analogy with statistical mechanics, formulates these aforementioned properties in terms of optimisation problems over invariant measures of the system:
	\[ P(\varphi) := \sup_{\mu \in \mathcal{M}(f)} \int \varphi\,\d\mu + h_{\mu}(f)  \]
	where $f$ is the possibly open expanding map (or equivalently the IFS), and $h_{\mu}(f)$ is the measure-theoretic entropy of $f$ with respect to $\mu$. As a simple example, $P(0)$ yields the topological entropy of the system.
	
	Often, such optimization problems are maximised by a unique invariant measure $\mu_\varphi$. Such a measure is known as an {\it equilibrium measure}.
	\\
	
	Equilibrium measures are typically fractal measures supported on measure zero sets, and do not possess much regularity. Consequently, outside certain restricted settings (e.g. uniform measure on the middle-thirds Cantor set) they do not have analytic expressions. Computation of Gibbs measures and associated quantities such as pressures can be done with a moderate degree of accuracy for very general maps and weights via Ulam's method \cite{Froyland07}. However, the analytic full-branch maps that are very common in pure mathematical settings, Chebyshev-Lagrange polynomial interpolation is a far more effective way to discretise transfer operators \cite{Bandtlow20, Wormell19}. Chebyshev-Lagrange methods have recently been used to solve many problems, such as computing absolutely continuous invariant measures \cite{Wormell19, Wormell21}, Lyapunov exponents \cite{Wormell19, Pollicott23}, computing Hausdorff dimensions via approximating pressure functions \cite{Pollicott22
	}, and even the spectral structure of geodesic flows \cite{Bandtlow21}. In this note, we will show that Chebyshev-Lagrange discretisation of transfer operators can generate very good approximations of equilibrium measures, both by direct computation and via simulating point samples. To our knowledge, this is the first method to compute Gibbs measures that is rigorously justified. Because the outputs of the Chebyshev-Lagrange method remain very accurate to high resolutions, they are particularly effective in numerical investigation of Fourier decay of equilibrium measures.
	
	The note is structured as follows: in Section~\ref{s:Transfer} we introduce the transfer operator and related objects, Section~\ref{s:CL} we briefly describe the theory and implementation of Chebyshev-Lagrange method, in Section~\ref{s:Equilibrium} we show that integrals against equilibrium measures can be estimated very simply and effectively and describe a simple computational algorithm to achieve this; in Section~\ref{s:Markov} we show how our methods can be effectively used to obtain point samples of the measures, and in Section~\ref{s:Applications} we give some applications of our methods to computing Fourier transforms and visualising measures. Finally in Section~\ref{s:Extensions} we consider possible extensions of these methods.
	
	\section{Transfer operators}\label{s:Transfer}
	Equilibrium measures have a very nice definition in terms of a functional operator known as the {\it transfer operator}, from which some other results arise. The transfer operator with respect to an appropriately smooth potential $\varphi$ (we will leave exactly how smooth until the next section) is defined to act on suitably smooth functions $\psi$ as
	\begin{equation} \L_\varphi \psi(x) = \sum_{\iota \in I} e^{\varphi_\iota(x))} \psi(g_\iota(x))), \label{eq:Transfer} \end{equation}
	where the contractions $g_\iota: [-1,1] \circlearrowleft$ are the branches of the iterated function system (or equivalently the inverse branches of the chaotic map $f$), and the weights are given as $\varphi_\iota := \varphi \circ \circ g_\iota$. If the $g_\iota, \varphi_\iota$ are suitably regular, then there exists some Banach algebra of functions $\B$ so that $\L_\varphi$ is a bounded endomorphism on $\B$ with spectral radius $e^{P(\varphi)}$. Furthermore, $\L_\varphi$ has compactness properties so that a kind of Perron-Frobenius theorem obtains: in particular, $\L_\varphi$ has a simple isolated eigenvalue at $e^{P(\varphi)}$, with (right) {\it eigenfunction} $\h_\varphi \in \B$ and left eigendistribution $\nu_\varphi \in \B^*$: $\nu_\varphi$ is commonly known as the {\it conformal measure}. It turns out that these two objects multiply together to give the corresponding equilibrium measure $\mu_\varphi$: for any $\psi \in \B$,
	\begin{equation} \int \psi\,\d\mu = \frac{\nu_\varphi[\psi \h_\varphi]}{\nu_\varphi[\h_\varphi]}. \label{eq:EquilibriumEigenfunction}\end{equation}
	
	\section{Chebyshev-Lagrange approximation}\label{s:CL}
	
	The idea of the Chebyshev-Lagrange method is to perform polynomial interpolation of the action of the transfer operator, which we will assume without loss of generality acts on the interval $[-1,1]$. This is done at certain, well-chosen nodes: these are the {\it Chebyshev nodes of the first kind}
	\[ x_{n,N} = \cos\tfrac{2n-1}{2N} \pi,\, n = 1,\ldots, N. \] 
	We can interpolate a function at these points using the Lagrange polynomials
	\[ \ell_{n,N}(x) = \prod_{m \neq n} \frac{x - x_{m,N}}{x_{n,N} - x_{m,N}}, \]
	which by construction have the property that 
	\begin{equation} \ell_{n,N}(x_{m,N}) = \delta_{mn}. \label{eq:LagrangeProperty}\end{equation}
	
	 This means we can define the interpolation map $\P_N$ on $C^0$ functions
	\[ \P_N \psi = \sum_{n=1}^N \ell_{n,N} \psi(x_{n,N}). \]
	For computational purposes, it is worth noting that the Chebyshev-Lagrange polynomials also have the following closed form for $x = \cos \theta \neq x_{n,N}$:
	\[ \ell_{n,N}(\cos \theta) = \frac{1}{2N} \left(\frac{\sin N(\theta-\theta_{n,N})}{\tan \tfrac{1}{2}(\theta-\theta_{n,N})} + \frac{\sin N(\theta+\theta_{n,N})}{\tan \tfrac{1}{2}(\theta+\theta_{n,N})} \right) \]
	where $\theta_{n,N} = \cos^{-1} x_{n,N} = \tfrac{2n-1}{2N} \pi$.
	
	Chebyshev-Lagrange interpolation is a very effective approximation method. The accuracy of the interpolant depends on the regularity of the interpolated functions. We will assume that everything is analytic, since this is the nicest and most common setting in pure mathematical contexts.
	
	We will first describe some Banach spaces of analytic functions. For $r > 0$ let $\E_r \subset \C$ be the complex open ellipse centred at $0$ with semi-major axis $\cosh r$ and semi-minor axis $\sinh r$. These so called Bernstein ellipses enclose $[-1,1]$, and shrink onto it as $r\to 0$. Then $\H_r$ is the space of bounded analytic functions on $\E_r$ with
	\[ \| \psi \|_{\H_r} := \sup_{z \in \E_r} | \psi(z) |. \]
	These function spaces compactly embed inside each other with $\|\cdot \|_{\H_r} \leq \|\cdot \|_{\H_R}$ for $R>r$, and they obey the Banach algebra property that $\|\psi \chi \|_{\H_r} \leq \|\psi\|_{\H_r} \|\chi \|_{\H_r}$.

	Let the vector space of polynomials of degree $\leq N-1$ be $E_N$. This set is the image of our interpolation operator $\P_N$, and so we can can computationally represent polynomials in $E_N$ by their coordinate vectors in the basis of Chebyshev-Lagrange polynomials $\{\ell_{n,N}\}_{n=1,\ldots, N}$. The property \eqref{eq:LagrangeProperty} means that this is can be done simply by evaluating at the Chebyshev nodes:
	\[ \vec \psi =  (\psi(x_{j,N}))_{j = 1,\ldots, N}. \]
	Similarly, vectors $\vec v$ represent the polynomials
	\[ v(x) = \sum_{j=1}^N \vec v\j  \ell_{j,N}(x).\]
	where $v\j $ is the $j$th element of $v$.
	
	While our transfer operator is a truly infinite-dimensional operator, we can approximate it on $E_N$ by the restricted, projected operator $\P_N \L_\varphi: E_N \circlearrowleft$. In the Chebyshev-Lagrange basis, this operator is given by an $N\times N$ matrix:
	\begin{equation} L_N = ((\mathcal{L}_\varphi\ell_{k,N})(x_{j,N}))_{j,k = 1,\ldots, N} \label{eq:TransferMatrix}\end{equation}

\section{Computation of weak estimates of equilibrium measures} \label{s:Equilibrium}

		It turns out that under analyticity assumptions on $\varphi_\iota, g_\iota$ and contraction assumptions on $g_\iota$, Chebyshev-Lagrange interpolation is very good at approximating the transfer operator $\L_\varphi$ \cite{Bandtlow20
		}. 

	Now in \eqref{eq:EquilibriumEigenfunction}, our equilibrium measure $\mu_\varphi$ was described as a product of left and right eigenfunctions of the transfer operator. The following result shows that we can simply copy this equation across to the projected operator $L_N$, and get a result whose error is exponentially small in $N$:
	\begin{theorem}\label{t:EquilibriumResult}
	Suppose that for some $R>r>0$, $\| \sum_{\iota \in I} e^{\varphi_\iota} \|_{\H_r} \leq \Phi$ and $g_\iota(\E_r) \subset \E_R$.\footnote{For maps with finite branches, this occurs if the $\varphi_\iota, g_\iota$ are real-analytic and $\sup_{\theta [0,\pi]}|(\cos^{-1} \circ g_\iota\circ \cos)'(\theta)| < 1$ \cite{Wormell19}.}
	
		Let $\vec \nu_N, \vec \h_N \in \mathbb{R}^N$ be leading left and right eigenvectors of the matrix $L_N$, and let 
		\[\vec \mu_N\j = \frac{1}{\sum_{i=1}^N \vec \nu_N^i  \vec \h_N^i }\vec \nu_N\j \vec h_N\j.\]
		 Then there exists $K'$ such that for $N$ sufficiently large and all $\psi \in \H_R$,
	\begin{equation} \left|\int \psi\,\d\mu_\varphi - \sum_{j=1}^N \vec \mu_N^j \psi(x_{N,j}) \right| \leq K'  e^{-(R-r)N} \| \psi \|_{\H_R} \label{eq:EquilibriumResult} \end{equation}
\end{theorem}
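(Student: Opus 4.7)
The plan is to reinterpret the discrete matrix quantities as finite-dimensional analogues of the eigenfunction/eigendistribution pair for $\L_\varphi$, invoke the Chebyshev-Lagrange operator approximation bound from \cite{Bandtlow20} together with standard simple-eigenvalue perturbation theory, and finally compare the exact and discrete versions of the ratio in \eqref{eq:EquilibriumEigenfunction}. Let $\lambda_N$ denote the leading eigenvalue of $L_N$, shared by $\P_N\L_\varphi|_{E_N}$. Setting $\h_N(x) := \sum_{j=1}^N \vec\h_N\j \ell_{j,N}(x) \in E_N$ and $\tilde\nu_N[\psi] := \sum_{j=1}^N \vec\nu_N\j \psi(x_{j,N})$, the Lagrange property \eqref{eq:LagrangeProperty} and the definition \eqref{eq:TransferMatrix} immediately yield $\P_N \L_\varphi \h_N = \lambda_N \h_N$ and $\tilde\nu_N[\L_\varphi v] = \lambda_N \tilde\nu_N[v]$ for all $v \in E_N$. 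A direct computation then rewrites the target approximation as
\[ \sum_{j=1}^N \vec\mu_N\j \psi(x_{j,N}) = \frac{\tilde\nu_N[\psi \h_N]}{\tilde\nu_N[\h_N]}, \]
which exactly mirrors \eqref{eq:EquilibriumEigenfunction}.

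Next I would cite the Chebyshev-Lagrange approximation bound from \cite{Bandtlow20}: under the hypotheses $g_\iota(\E_r) \subset \E_R$ and $\|\sum_\iota e^{\varphi_\iota}\|_{\H_r} \leq \Phi$, the transfer operator factors as $\L_\varphi: \H_r \to \H_R \hookrightarrow \H_r$ compactly, and $\|\P_N\L_\varphi - \L_\varphi\|_{\H_r \to \H_r}$ decays like $e^{-(R-r)N}$. Since $e^{P(\varphi)}$ is a simple isolated eigenvalue of $\L_\varphi$, a standard contour-integral/resolvent perturbation argument produces $|\lambda_N - e^{P(\varphi)}| = O(e^{-(R-r)N})$, $\|\h_N - \h_\varphi\|_{\H_r} = O(e^{-(R-r)N})$ under a suitable normalization, and $\|\tilde\nu_N - \nu_\varphi\|_{\H_r^*} = O(e^{-(R-r)N})$ for the corresponding dual eigendistribution (where $\tilde\nu_N$ is extended from $E_N^*$ to $\H_r^*$ via pre-composition with $\P_N$, using that $\tilde\nu_N[\psi] = \tilde\nu_N[\P_N\psi]$).

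To conclude, I would write
\[ \int \psi\,\d\mu_\varphi - \sum_{j=1}^N \vec\mu_N\j \psi(x_{j,N}) = \frac{\nu_\varphi[\psi\h_\varphi]}{\nu_\varphi[\h_\varphi]} - \frac{\tilde\nu_N[\psi\h_N]}{\tilde\nu_N[\h_N]}, \]
telescope both numerator and denominator differences, and bound each via the Banach algebra inequality $\|\psi\h\|_{\H_r} \leq \|\psi\|_{\H_R}\|\h\|_{\H_r}$ together with the exponential eigenvector/eigendistribution convergences above. Since $\nu_\varphi[\h_\varphi] \neq 0$ (this is implicit in the well-definedness of \eqref{eq:EquilibriumEigenfunction}), the denominator $\tilde\nu_N[\h_N]$ is bounded away from zero uniformly for $N$ large, giving the claimed bound $K' e^{-(R-r)N} \|\psi\|_{\H_R}$.

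The main obstacle will be the left-eigendistribution convergence, since $\vec\nu_N$ a priori only defines a functional on $E_N$ by evaluation at Chebyshev nodes, and its entries could in principle grow with $N$. The resolution is to identify $\tilde\nu_N$, normalized so that $\tilde\nu_N[\h_N]=1$, with the image under spectral projection of any fixed functional $\ell \in \H_r^*$ with $\ell(\h_\varphi) \neq 0$; the resolvent bound from the previous paragraph then controls $\tilde\nu_N$ uniformly in $\H_r^*$ and yields its exponential convergence to $\nu_\varphi$ in the same norm. Once this is set up, the algebraic manipulations in the final step are routine.
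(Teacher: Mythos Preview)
Your overall architecture matches the paper's: reinterpret $\vec\nu_N,\vec h_N$ as functional-analytic eigen-data of $\P_N\L_\varphi$, invoke the Bandtlow--Slipantschuk operator bound plus simple-eigenvalue perturbation theory, and then compare the two ratios. The gap is in the final telescoping step, and specifically in your treatment of $\tilde\nu_N$.

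The spectral projection $\Pi_N^*\ell$ does \emph{not} produce your node-evaluation functional $\tilde\nu_N[\psi]=\sum_j\vec\nu_N^{\,j}\psi(x_{j,N})$; it produces the genuine left eigendistribution $\nu_N\in\H_r^*$ of $\P_N\L_\varphi$. These two agree on $E_N$ but differ on $\H_r$: one has $\tilde\nu_N=\nu_N\circ\P_N$, and $\P_N$ is \emph{not} uniformly bounded $\H_r\to\H_r$ (there is no regularity gain to exploit), so neither uniform $\H_r^*$-boundedness of $\tilde\nu_N$ nor its $\H_r^*$-convergence to $\nu_\varphi$ follows from the resolvent argument. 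Consequently, when you telescope $\tilde\nu_N[\psi h_N]-\nu_\varphi[\psi h_\varphi]$, one of the two pieces always fails: either you need $\|\tilde\nu_N-\nu_\varphi\|_{\H_r^*}$ small (false), or you need $\|\tilde\nu_N\|_{\H_r^*}$ bounded (not established), or you need $\|h_N\|_{\H_R}$ bounded (not immediate, since $h_N\to h_\varphi$ only in $\H_r$).

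The paper resolves this by working with $\nu_N$ rather than $\tilde\nu_N$, noting that the discrete sum equals $\nu_N[\P_N(\psi h_N)]$, and then separately bounding $\|\P_N(\psi h_N)-\psi h_N\|_{\H_r}$. That bound is the substantive step you are missing: since $h_N$ is only controlled in $\H_r$, one cannot apply $\|(I-\P_N)\|_{\H_R\to\H_r}$ directly to $\psi h_N$. The paper's device is to use the eigen-equation $h_N=e^{-P_N}\P_N\L_\varphi h_N$ to replace $h_N$ by $\L_\varphi h_N\in\H_R$ (with uniformly bounded norm), after which an intermediate-ellipse splitting gives $\|(I-\P_N)(\psi h_N)\|_{\H_r}=O(e^{-(R-r)N})\|\psi\|_{\H_R}$. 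Once that interpolation estimate is in hand, the remaining telescoping is indeed routine, exactly as you say.
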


The computational algorithm to perform this estimate is thus as follows:
\begin{enumerate}
	\item Construct the $N\times N$ transfer operator matrix $L_N$ according to \eqref{eq:TransferMatrix}.
	\item Estimate the leading left and right eigenvectors of $L_N$, and multiply them together and normalise to obtain $\vec \mu_N$.
	\item Evaluate $\psi$ at the Chebyshev nodes and compute the dot product of this vector with $\vec \mu_N$ \eqref{eq:EquilibriumResult}.
\end{enumerate}
For transfer operators with a reasonable (e.g. finite) number of branches, these steps can be done in a few lines of code. The complexity of this algorithm is $\mathcal{O}(N^2)$, and so $N=1000$ (i.e. hundreds of digits of accuracy) can be handled on a personal computer in a few seconds. 

A plot of convergence of estimates is given in Figure~\ref{f:Convergence}.

Computation of integrals with respect to the conformal measure is also possible: in this case, one would compute $\int \psi \d\nu_\varphi \approx \tfrac{1}{\sum_j h_N\j} \sum_{j=1}^N h_N\j \psi(x_{N,j})$.

\begin{figure}
	\centering
	\includegraphics{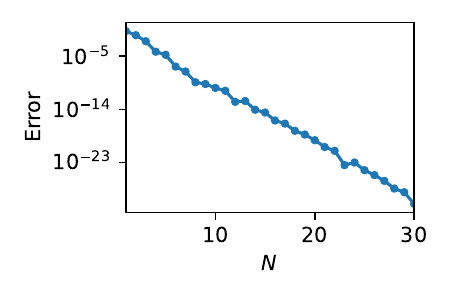}
	\caption{Convergence of the estimate of the equilibrium measure for the Gauss map $f(x) = 1/x \mod 1$ on the continued fraction digits $\{2,3,4,5,6\}$ with respect to the geometric potential $\varphi(x) = \log |f'|$ integrated against $\psi(x) = x$.}
	\label{f:Convergence}
\end{figure}

\section{Markov chain Monte Carlo sampling of equilibrium measures} \label{s:Markov}

One may also wish to obtain representative point samples from an invariant measure $\mu$, for example, orbits $\{ x_t \}_{t = 1,\ldots,T-1}$ where each $x_t \sim \mu$ and each $x_{t+1} = g_{i_t}(x_t)$ for some $i_t \in I$. If one is given a potential $\varphi$ so that 
\begin{equation}\sum_{\iota \in I} e^{\varphi_\iota}(x) = 1\textrm{ for all }x \in [-1,1],\label{eq:EquilibriumMarkov}\end{equation}
then $\mu$ is the invariant measure of the Markov chain on $[-1,1]$ defined by setting $x_{t+1} = g_{i_t}(x_t)$ where the distribution of $i_t$ is given by $\mathbb{P}(i_t = \iota) = e^{\varphi_\iota(x_t)}$. By standard transfer operator theory, the distribution of $x_{t}$ converges exponentially quickly to $\mu$ in most metrics (e.g. Wasserstein distance) regardless of the initialisation of the $x_0$. Thus in practice, a series $\{x_t\} \sim \mu$ can be generated by starting with a randomly initialised $x_{-T_0}$ for some large $T_0$ (e.g. $10,000$). This idea has previously been employed in some restricted circumstances, such as in computing measures of maximal entropy for Markovian logistic maps \cite{Grassberger85}.

However, the condition \eqref{eq:EquilibriumMarkov} implies that the right eigenfunction $\h_\varphi$ must be constant, with pressure $P(\varphi) = 0$, which is obviously non-generic. However, it is standard that a equilibrium invariant measure with respect to $\varphi$ is also a equilibrium invariant measure with respect to the potential $\tilde \varphi = \varphi + w - w \circ f - P(\varphi)$ for any H\"older function $w$, and in particular setting $w = \log \h_\varphi$. In this case, we see that 
\[ L_{\tilde\varphi} \psi(x) = \sum_{\iota \in I} e^{\tilde \varphi(g_\iota(x))} \psi(g_\iota(x)) = e^{-P(\varphi)} \h_\varphi(x)^{-1} \sum_{\iota \in I} e^{\varphi_\iota(x)} (\h_\varphi \psi)(g_\iota(x)) \]1
so constant functions are eigenfunctions of $\L_{\tilde \varphi}$ and the pressure is zero, as required to make a Markov sample of $\mu$. Note that the dynamics (the $g_\iota$) are the same.

Our Chebyshev-Lagrange method allows us to estimate $\h_\varphi$ extremely accurately by polynomials $\h_N(x)$. We can thus generate orbits sampled from $\mu$ as follows:
\begin{enumerate}
	\item Construct the $N\times N$ transfer operator matrix $L_N$ and estimate the leading eigenvalue $e^{P_N}$ and right eigenvector $\vec\h_N$ of $L_N$.
	\item Initialise $x_{-T_0}$ randomly, for $T_0$ large.
	\item Iteratively generate $x_{t+1} = g_{\iota_t}(x_t)$ by choosing $\iota_t \in I$ at each step randomly with probability\footnote{$\L_\varphi \h_N$ is used in the denominator so that all probabilities sum to $1$, but to save computational expense one can replace it $e^{P_N}\h_N(x_t)$ for all branches except one, with this distinguished branch taking the remainder of the probability. All results, in particular Theorem~\ref{t:FourierMC}, go through.}
	\begin{equation} \mathbb{P}(\iota_t = \iota) = \frac{e^{\varphi(g_\iota(x_t))} \h_N(g_\iota(x_t))}{(\L_\varphi \h_N)(x_t)}, \label{eq:IterativePotential} \end{equation}
	where the right eigenfunction estimate is given by $h_N(x) = \sum_{j=1}^N \vec \h_N\j \ell_{j,N}(x)$,
	and setting $x_{t+1} = g_{\iota_t}(x_t)$. Discard iterates up to $x_0$.
\end{enumerate}

Among other things, this is in fact an effective way to obtain almost unbiased estimates of integrals against non-regular functions, such as H\"older functions, or Fourier modes of very high order. The mean of a function $\psi$ can be estimated via a Birkhoff mean:
\[ \int \psi \, \d \mu \approx \frac{1}{T} \sum_{t=1}^T \psi(x_t) =: M_T(\psi). \]
Because the procedure to generate $x_t$ is random, so too is $M_T$. It obeys a central limit theorem, so that asymptotically $M_T(\psi)$ approximates a normal distribution with mean
\[ \mathbb{E}[M_T(\psi)] =  \int \psi \, \d \mu + \mathcal{O}(e^{-(R-r)N}+ T^{-1} e^{-c \alpha T_0})  \]
and standard deviation $\mathcal{O}(T^{-1/2})$. In general, the constants involved are mild for bounded but irregular functions: in fact, as demonstrated in Section~\ref{ss:MCFourier}, they are only $\mathcal{O}(\log |\xi|)$ for Fourier exponentials!

This kind of approximation can be quite effective: setting $T = 10^7$ provides around 3 significant figures of accuracy. Furthermore, the random error can be quantified by taking multiple independent samples and from them generating a Student $t$-test confidence interval for their expectation value. This kind of confidence interval estimate has been used to obtain reliable evidence for conditional mixing, a question related to fractal decay \cite{Wormell23}.

Recalling that $\mu = \nu_\varphi \h_\varphi$ and therefore $\nu_\varphi = \h_\varphi^{-1} \mu$, the conformal measure $\nu_\varphi$ can also be sampled by weighting the $x_t$ that sample $\mu$ by $\h_N^{-1}(x_t)$. So, for example,
\[ \int \psi \, \d \nu_\varphi \approx \tfrac{1}{T} \sum_{t=1}^T \psi(x_t) \h_N^{-1}(x_t). \]

\section{Application to computing Fourier transforms} \label{s:Applications}

\begin{figure}
	\centering
	\includegraphics{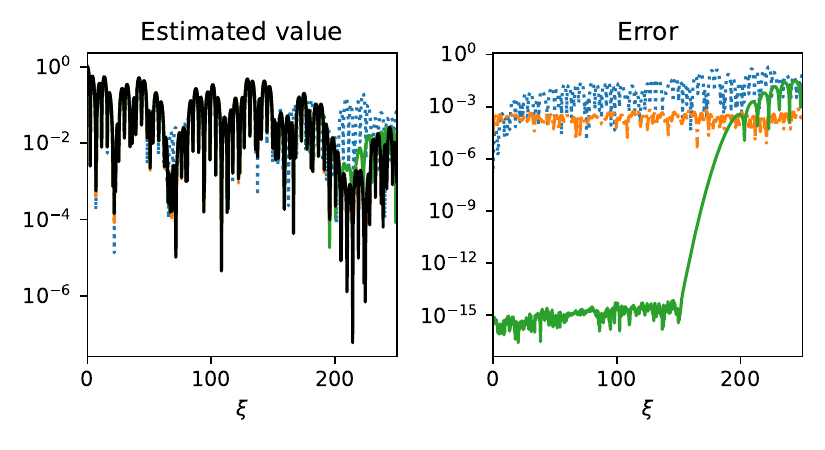}
	\caption{Left: the absolute value of true Fourier transform for the uniform measure of the middle-$1/\pi$ Cantor set on $[-1,1]$ (black), compared against a Chebyshev-Lagrange integral estimate as in Section~\ref{s:Equilibrium} (green line), a Chebyshev-Lagrange-based Monte Carlo estimate with $T = 10^7$ (orange dash-dots), and an Ulam estimate \cite{Froyland07} (blue dots). Right: plot of the absolute value of the respective errors. Discretisations of order $N = 200$ were used. Note that the discrepancy for the integral estimate is exponentially small (much smaller than the standard floating-point round-off error of around $ 10^{-16}$) but only for $\xi \lesssim N$.}
	\label{f:Fourier}
\end{figure}

The Fourier transform of a measure $\mu_\varphi$ is given by
\[ \hat \mu_\varphi(\xi) = \int e^{-i\xi x}\,\d\mu_\varphi(x). \]

The decay of the Fourier transform for fractal measures, and in particular equilibrium measures, has recently become a hot topic in fractal geometry \cite{Sahlsten20,Leclerc23, Sahlsten23}. There are many open questions in this area (see the following survey \cite{Sahlsten23}), but methods in this note can already be used very effectively to numerically estimate the Fourier transform of equilibrium measures.

\subsection{Chebyshev-Legendre approximation}

As complex exponentials are smooth, our weak approximation algorithm can be used very effectively. However, some care must be taken that the approximation's resolution is not worse than the exponential function's oscillatory scale:
\begin{corollary}\label{c:FourierCL}
	Under the previous assumptions, there exists $K'$ such that for $N$ sufficiently large and all $\psi \in \H_R$,
	\begin{equation*} \left|\hat\mu_\varphi(\xi) - \sum_{j=1}^N \vec \mu_N^j e^{i \xi x_{N,j}} \right| \leq K'  e^{-R((1-\tfrac{r}{R})N-\xi)} \end{equation*}
\end{corollary}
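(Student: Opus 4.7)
The plan is to apply Theorem~\ref{t:EquilibriumResult} directly to the entire test function $\psi_\xi(x) = e^{-i\xi x}$, whose integral against $\mu_\varphi$ is precisely $\hat\mu_\varphi(\xi)$ (the corollary's displayed sum being then the corresponding Chebyshev-Lagrange estimate, up to a sign convention matching the paper's Fourier convention $\hat\mu_\varphi(\xi) = \int e^{-i\xi x}\,\d\mu_\varphi$). Since $\psi_\xi$ is entire, it lies in $\H_R$ for the $R$ appearing in the hypothesis $g_\iota(\E_r)\subset\E_R$ of Theorem~\ref{t:EquilibriumResult}, so the theorem applies at once and gives
\[ \left|\hat\mu_\varphi(\xi) - \sum_{j=1}^N\vec\mu_N\j\,e^{-i\xi x_{N,j}}\right| \leq K'\, e^{-(R-r)N}\,\|\psi_\xi\|_{\H_R}, \]
with $K'$ the same constant as in Theorem~\ref{t:EquilibriumResult}, since by inspection it depends only on $r$, $R$ and the dynamical data $(g_\iota,\varphi_\iota)$, not on the choice of test function.

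The only real computational step is to bound the norm on the right. Parametrising the boundary of the Bernstein ellipse $\E_R$ as $z(\theta) = \cosh R\cos\theta + i\sinh R\sin\theta$, $\theta\in[0,2\pi)$, we have $|\psi_\xi(z(\theta))| = e^{\xi\sinh R\sin\theta}$, whose supremum over $\theta$ is $e^{|\xi|\sinh R}$, attained at $\theta = \pm\pi/2$. The maximum modulus principle promotes this to $\|\psi_\xi\|_{\H_R} = e^{|\xi|\sinh R}$. Substituting, the error is at most $K'\exp(-(R-r)N + |\xi|\sinh R)$, which matches the corollary's bound $K'e^{-R((1-r/R)N - \xi)} = K'e^{-(R-r)N + R\xi}$ up to the coefficient of $|\xi|$ in the exponent (the corollary writing $R$ where the direct calculation yields $\sinh R$, a harmless simplification when stating the regime of exponential accuracy: the error is exponentially small in $N$ as long as $|\xi|$ is appreciably below $(1-r/R)N$).

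I do not expect any real obstacle: the argument is essentially a one-line consequence of Theorem~\ref{t:EquilibriumResult} combined with a standard Bernstein-ellipse norm calculation. The only slightly subtle point is the uniformity of $K'$ over test functions, which is transparent from the statement of Theorem~\ref{t:EquilibriumResult}.
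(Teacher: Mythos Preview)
Your proposal is correct and follows exactly the paper's own proof: apply Theorem~\ref{t:EquilibriumResult} to $\psi(x)=e^{\pm i\xi x}$ and bound $\|\psi\|_{\H_R}$ by evaluating on the Bernstein ellipse. Your computation in fact gives the sharper exponent $|\xi|\sinh R$ rather than the $R\xi$ stated in the corollary; since $\sinh R>R$, the paper's replacement of $\sinh R$ by $R$ is not literally an upper bound, so your caveat about this being a ``harmless simplification'' for identifying the regime $|\xi|\lesssim (1-r/R)N$ is the right way to read it.
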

\begin{proof}
	We apply Theorem~\ref{t:EquilibriumResult} with $\psi(x) = e^{i \xi x}$. In this case $\| \psi \|_{H_R}  = \sup_{z \in H_R} |e^{i \xi z}| = e^{\xi \sinh R}\leq e^{\xi R}$. The result follows.
\end{proof}
This result implies that we get exponentially good results provided that $\xi \lesssim c N$, where
 \[c = \frac{R-r}{\sinh R} \approx 1 - \sup_{\substack{\iota \in I\\ \theta \in [0,\pi]}} |(\cos^{-1} \circ g_\iota \circ \cos)'(\theta)| < 1.\]
 In practice, however it appears to be acceptable for $\xi \lesssim N$ (see Figure~\ref{f:Fourier}). An example of this application is given in Figure 1 of \cite{Sahlsten23}.
 
 \subsection{Monte Carlo estimate}\label{ss:MCFourier}
 
 On the other hand, it is also possible to use Monte Carlo approximations like those in in Section~\ref{s:Markov} to obtain an approximation of the Fourier transform. This will be less accurate, due to the slow central limit theorem-type convergence, but, it turns out, for the weak approximation in Theorem~\ref{t:EquilibriumResult}, we find that Monte Carlo method works well at frequencies $\xi$ much, much larger than the notional resolution of the transfer operator:
\begin{theorem}\label{t:FourierMC}
	Let $\ell_\xi = 1 + \log (1 + |\xi|)$. There exist $c<1$ and $K'', P_1, P_2, P_3$ such that for every $\xi \in \mathbb{R}$ and any initialisation $x_{-T_0}$:
	\begin{enumerate}[a.]
		\item The error of the expectation of $M_T$ is exponentially small when $T_0 \gg \ell_\xi$ and $N \gg \log \ell_\xi$.
	\[ \left| \bE M_T(e^{i\xi \cdot}) - \hat\mu_\varphi(\xi) \right|  \leq K'' \left(T^{-1} c^{ T_0 - \ell_\xi} + e^{-(R-r)N} \ell_\xi \right), \]
	\item $M_T$ obeys a Central Limit Theorem in $T$. Indeed, when $T$ is greater than a fixed constant $T'$, for any $p \geq e^{-P_1 (T/T' - 1)}$ we have with probability $1- p$,
\begin{equation} \left| M_T(e^{i\xi \cdot}) - \bE M_T(e^{i\xi \cdot}) \right|  \leq \frac{\sqrt{P_2 + P_3 \log p^{-1}}}{\sqrt{T/\ell_\xi^2}}. \label{eq:CLTbound}\end{equation}
	\end{enumerate}
\end{theorem}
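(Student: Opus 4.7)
The plan is to decompose the error of the Monte Carlo estimator $M_T(e^{i\xi \cdot})$ into a \emph{mixing} component stemming from the finite burn-in $T_0$ and a \emph{model} component stemming from using $\h_N$ in place of $\h_\varphi$, and then to extract the logarithmic $\ell_\xi$ factors by exploiting the interplay of $\|e^{i\xi\cdot}\|_\infty=1$ against $\|e^{i\xi\cdot}\|_\Lip=|\xi|$. Let $P_N$ denote the Markov kernel defined by \eqref{eq:IterativePotential}. Since the Chebyshev-Lagrange perturbation theory of \cite{Bandtlow20} underlying Theorem~\ref{t:EquilibriumResult} yields $\|\h_N - \h_\varphi\| = O(e^{-(R-r)N})$ in a suitable analytic norm, $P_N$ inherits a uniform spectral gap for large $N$ and possesses an invariant measure $\tilde\mu_N$ differing from $\mu_\varphi$ by $O(e^{-(R-r)N})$ in an appropriate dual norm.

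For part (a), I would use the split
\[ \bE M_T(e^{i\xi\cdot}) - \hat\mu_\varphi(\xi) = \tfrac{1}{T}\sum_{t=1}^T \left(P_N^{t+T_0}[e^{i\xi\cdot}](x_{-T_0}) - \tilde\mu_N(e^{i\xi\cdot})\right) + \left(\tilde\mu_N(e^{i\xi\cdot}) - \mu_\varphi(e^{i\xi\cdot})\right). \]
For the mixing sum I would combine the contraction bound $|P_N^k \psi - \tilde\mu_N(\psi)|\leq C\rho^k\|\psi\|_\Lip$ with the trivial bound $|P_N^k\psi|\leq \|\psi\|_\infty$: the minimum of $C\rho^k|\xi|$ and $2$ gives a decay that only starts after a burn-in of order $\ell_\xi$, yielding the $T^{-1}c^{T_0-\ell_\xi}$ factor after summing in $t$. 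For the model term, I would evaluate the difference $\tilde\mu_N - \mu_\varphi$ against $e^{i\xi\cdot}$ in the analytic norm $\H_{r'}$ with $r' \sim 1/|\xi|$, so that $\|e^{i\xi\cdot}\|_{\H_{r'}} = e^{|\xi|\sinh r'} = O(1)$, and track the $r'$-dependence of the perturbation bound to produce the single $\ell_\xi$ factor multiplying $e^{-(R-r)N}$ rather than the naive $e^{|\xi|\sinh R}$.

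For part (b), I would appeal to a Bernstein-type concentration inequality for geometrically ergodic Markov chains (of the sort due to Paulin or Adamczak). The asymptotic variance satisfies $\sigma^2(e^{i\xi\cdot})\leq\sum_{k\geq 0}|\mathrm{Cov}(e^{i\xi x_0},e^{i\xi x_k})|\leq \sum_k\min(2,C\rho^k|\xi|)=O(\ell_\xi)$, while the sub-Gaussian range factor appearing in such inequalities contributes a further $O(\ell_\xi)$ from the effective mixing time at scale $1/|\xi|$, together giving the variance proxy $\ell_\xi^2/T$ in \eqref{eq:CLTbound}. The restriction $p\geq e^{-P_1(T/T'-1)}$ delineates the regime in which the tail is still genuinely sub-Gaussian rather than being governed by coarse mixing behaviour.

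I expect the main obstacle to be securing the logarithmic $\ell_\xi$ dependence uniformly in $|\xi|$ throughout: any naive estimate using $\|e^{i\xi\cdot}\|_{\H_R} = e^{|\xi|\sinh R}$ would produce exponentially bad $|\xi|$-dependence. The remedy — balancing the ellipse parameter against the Lipschitz constant, or equivalently running a coupling argument that exploits the Lipschitz/$L^\infty$ trade-off directly — is delicate because it must combine cleanly with the spectral gap estimates for $P_N$, the perturbation estimate $\h_N\to\h_\varphi$, and the ergodic concentration inequality, without any intermediate step forcing a polynomial $|\xi|$ factor back into the final bound.
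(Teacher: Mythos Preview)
Your overall decomposition in part (a) into a mixing term and a model term, and your treatment of the mixing term via the $\min(C\rho^k|\xi|,2)$ trade-off, are exactly what the paper does.

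Where you diverge from the paper is in the model term $\tilde\mu_N(e^{i\xi\cdot})-\mu_\varphi(e^{i\xi\cdot})$. You propose to shrink the Bernstein ellipse to $r'\sim 1/|\xi|$ so that $\|e^{i\xi\cdot}\|_{\H_{r'}}=O(1)$ and then ``track the $r'$-dependence of the perturbation bound''. This is not what the paper does, and it is not clear it works: the eigendistribution convergence $\nu_N\to\nu_\varphi$ from Proposition~\ref{p:SpectralError} lives in $\H_r^*$, not in $\H_{r'}^*$ for arbitrarily small $r'$, and pushing the spectral perturbation machinery of \cite{Bandtlow20} through shrinking ellipses with only a logarithmic loss is at best a substantial side project. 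The paper instead telescopes
\[
\hat\mu_{\tilde\varphi_N}(\xi)-\hat\mu_{\varphi}(\xi)=\sum_{t\ge 0}\int(\L_{\tilde\varphi_N}-\L_{\tilde\varphi})\,\L_{\tilde\varphi_N}^{t}\tilde e_\xi\,\d\mu_{\tilde\varphi},
\]
bounds $\|(\L_{\tilde\varphi_N}-\L_{\tilde\varphi})\psi\|_{C^0}\le Ce^{-(R-r)N}\|\psi\|_{C^0}$ from the $L^\infty$ closeness of the potentials, and then applies \emph{exactly the same} $\min(2,Cc^t(1+|\xi|))$ trick to $\|\L_{\tilde\varphi_N}^t\tilde e_\xi\|_{C^0}$ to harvest the single $\ell_\xi$. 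This is the ``Lipschitz/$L^\infty$ trade-off'' you mention only as a fallback; in fact it is the main line.

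For part (b) the gap is more serious. The paper does not invoke a black-box Bernstein inequality \`a la Paulin or Adamczak; those results need either an $L^2(\mu)$ spectral gap or a uniform minorisation/Doeblin condition, and the constants they produce involve the mixing time and the observable norm in a way that does not obviously collapse to $\ell_\xi^2$ here. Instead the paper runs the exponential Chebyshev argument by hand: it studies the twisted operator $\L_{\chi_\omega}$ with $\chi_\omega=\tilde\varphi_N+\omega f_\xi$, shows that for $\omega\le\omega_0\ell_\xi^{-1}$ this operator preserves a cone defined by the $\xi$-adapted modulus of continuity
\[
\Omega_\xi(s)=(1+M')Ms+M'\,\frac{\log(1+|\xi|s)}{\ell_\xi},
\]
and from this cone (via a coupling on blocks of length $\lceil\ell_\xi\rceil$) extracts $P(\chi_\omega)\le Q\ell_\xi^2\omega^2$, which after optimisation in $\omega$ gives exactly the sub-Gaussian tail \eqref{eq:CLTbound} and the range restriction on $p$. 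Your heuristic ``$\sigma^2=O(\ell_\xi)$ times an extra $\ell_\xi$ from the effective mixing time'' lands at the right answer, but turning it into a proof without this $\xi$-dependent cone construction is precisely the difficulty; the off-the-shelf inequalities will hand you a polynomial $|\xi|$ factor through the Lipschitz norm unless you re-engineer them around $\Omega_\xi$, at which point you are doing the paper's argument.
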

The proof of this result is given in Appendix~\ref{s:FMCProof}. The success of the Monte Carlo algorithm at high frequencies is driven by the fact that what spatially localises the measure $\mu_\varphi$ is the IFS $\{g_\iota\}$, which, unlike the eigenfunctions, we know exactly. The dependence on the IFS weights is more or less down to pointwise convergence, plus an initial mixing time of $\mathcal{O}(\log |\xi|)$ timesteps.
	
An illustrative example of the power of Theorem~\ref{t:FourierMC} is that given an IFS that contracts the complex disc of radius $1.5$, if $N=10$ and $T = 10^9$, the Fourier transform of the equilibrium measure on $[-1,1]$ can be estimated to around 3 digits for frequencies as large as $\xi \sim 10^{12}$ (at which point high-precision floating point would become necessary). Furthermore, the Gaussian-esque relationship between the tail probability $p$ and the error size in \eqref{eq:CLTbound} means that large numbers frequencies $\xi$ can be estimated from the same point sample $\{x_t\}_{t = 1,\ldots,T}$ without overly large outliers in the Fourier transform being generated.

For pictorial illustration, we give a comparison of the Monte Carlo method against the true Fourier transform at high frequencies $\xi$ in Figure \ref{f:highfreq}.

\begin{figure}
	\centering
	\includegraphics{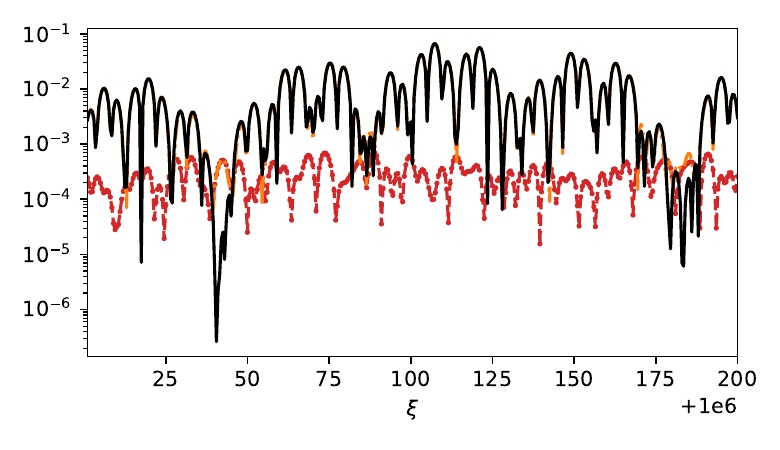}
	\caption{For $\xi$ between $1,\!000,\!000$ and $1,\!000,\!200$, the absolute value of true Fourier transform for the uniform measure of the middle-$1/\pi$ Cantor set on $[-1,1]$ (black) compared against a Monte Carlo estimate with $T = 10^7$ (orange dash-dots), and the error between the two (red dashed line).}
	\label{f:highfreq}
\end{figure}

\section{Extensions}\label{s:Extensions}

The computational techniques suggested in this paper, and in particular the meta-formula (generalising \eqref{eq:EquilibriumResult}) that
\[ \int \psi\,\d\mu \approx \sum_{j=1}^N \frac{\vec \nu_N \cdot \overrightarrow{\P_N (\h_N \psi)} }{\vec \nu_N \cdot \vec \h_N}\]
is very general, and can be applied to other kinds of systems and transfer operator discretisations. 

Further examples where a Chebyshev-Lagrange type discretisation can be useful are in multiple-dimensional iterated function systems \cite{Vytnova23} and infinite-branched transfer operators \cite{Wormell21,Vytnova23,Macleod93}, as well as in hyperbolic dynamics.

Other kinds of discretisations will also work effectively here. It is already known that it can be applied to Ulam's method, which approximates functions by piecewise constant functions \cite{Froyland07}, although the convergence rate of Ulam's method is slow, at $\mathcal{O}(N^{-1}) \log N$. Furthermore, estimates used using Ulam schemes cannot effectively be used to obtain good weak estimates of Fourier transforms, as even for uniformly expanding maps the error can be expected to be $\mathcal{O}(\xi N^{-1} \log N)$ (see Figure~\ref{f:Fourier}). This means Ulam method requires $N/\xi \gg 1$ for good estimates, unlike the analytic Chebyshev-Lagrange case which only requires $N-\xi\gg 1$. However, it may potentially be possible to use Ulam's methods estimates for the eigenfunction to obtain reasonable Monte Carlo estimates using methods in Section~\ref{s:Markov}. Other discretisations that may be profitably used to compute properties of equilibrium measures include Extended Dynamic Mode Decomposition, where transfer operators may be generated from orbits of the system, with no knowledge of the map required \cite{Bandtlow23}, as well as higher-order bases that generate sparse transfer operator matrices, which may be the most numerically efficient technique to study finitely differentiable maps \cite{Ding93}.



\appendix
\section{Proof of Theorem~\ref{t:EquilibriumResult}}\label{s:CLProof}
	
	In this appendix we will prove Theorem~\ref{t:EquilibriumResult}.
	
		Under our assumptions, the transfer operator maps $\H_r$ into a space of higher regularity $\H_R,\, R>r$ \cite[Lemma~3.2]{Bandtlow20}. On the other hand, the interpolation $\P_N$ is very close in norm to the identity when considered as a map $\H_R \to \H_r$ \cite[Lemma~2.6]{Bandtlow20}. The synthesis of these results, based on \cite[Theorem~3.3]{Bandtlow20}, is as follows:
	\begin{proposition}\label{p:SpectralError}
		Suppose that for some $R>r>0$, $\| \sum_{\iota \in I} e^{\varphi_\iota} \|_{\H_r} \leq \Phi$ and $g_\iota(\E_r) \subset \E_R$. Then $\L_\varphi$ is a compact endomorphism on $\H_R$, and there exists a constant $C$ so that for all $N \in \mathbb{N}$,
		\[ \| \P_N \L - \L \|_{\H_r} \leq C \Psi e^{(R-r) N}.\]
		
		Hence, there exists $K$ such that for $N$ large enough, if $e^{P_N(\varphi)}$ is the simple leading eigenvalue of $\P_N \L$ with a corresponding left (resp. right) eigenvector $\nu_N \in \H_r^*$ (resp. $\h_N \in \H_r$), then
		\[ |P_N(\varphi) - P(\varphi)|, \|\nu_N - \nu_\varphi\|_{\H_r^*}, \|\h_N - \h_\varphi\|_{\H_r} \leq K e^{-(R-r)}. \]
	\end{proposition}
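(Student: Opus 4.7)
The plan is to assemble the three cited ingredients from \cite{Bandtlow20} with standard analytic perturbation theory for a simple isolated eigenvalue of a compact operator; the statement is essentially a combination rather than a new result, so my proof would be to carefully verify each link in this chain.

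First, for compactness, I would establish that $\L_\varphi$ maps $\H_r$ boundedly into the smaller-domain, stronger-norm space $\H_R$. For $\psi \in \H_r$, the composition $\psi \circ g_\iota$ extends analytically to $\E_R$ because $g_\iota(\E_R) \subset \E_r$ (reading the hypothesis in the contraction-sense indicated by the footnote), with $\|\psi \circ g_\iota\|_{\H_R} \leq \|\psi\|_{\H_r}$. Multiplying by $e^{\varphi_\iota}$ and summing, the Banach-algebra property of $\H_r$ and the bound $\Phi$ yield $\|\L_\varphi \psi\|_{\H_R} \lesssim \Phi \|\psi\|_{\H_r}$; this is \cite[Lemma 3.2]{Bandtlow20}. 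Composing with the Montel-type compact embedding $\H_R \hookrightarrow \H_r$ gives compactness of $\L_\varphi$ as an endomorphism of $\H_r$ (and, by an identical argument using the nested analyticity, of $\H_R$).

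Second, for the operator-norm bound, I would factor
\[ \P_N \L_\varphi - \L_\varphi = (\P_N - I) \L_\varphi \]
and combine the above $\|\L_\varphi\|_{\H_r \to \H_R} \lesssim \Phi$ with the classical Chebyshev-Lagrange interpolation estimate $\|\P_N - I\|_{\H_R \to \H_r} \leq C_1 e^{-(R-r)N}$ of \cite[Lemma 2.6]{Bandtlow20}, whose exponential rate comes from decay of Chebyshev coefficients of a function analytic on $\E_R$ when measured on the smaller ellipse $\E_r$. Multiplying the two yields the stated bound (with the natural reading $C\Phi e^{-(R-r)N}$).

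Third, for the spectral conclusions, since $\L_\varphi$ is compact and has a simple isolated leading eigenvalue $e^{P(\varphi)}$ by Ruelle--Perron--Frobenius, I would pick a small closed curve $\gamma \subset \C$ encircling only $e^{P(\varphi)}$ and write the spectral projectors as
\[ \Pi_\varphi = \frac{1}{2\pi i} \oint_\gamma (z-\L_\varphi)^{-1}\,\d z, \qquad \Pi_N = \frac{1}{2\pi i}\oint_\gamma (z-\P_N \L_\varphi)^{-1}\,\d z. \]
Once $N$ is large enough that the operator perturbation is smaller than $(\sup_{z \in \gamma}\|(z-\L_\varphi)^{-1}\|_{\H_r})^{-1}$, a Neumann-series argument shows $(z-\P_N\L_\varphi)^{-1}$ exists and is uniformly bounded on $\gamma$; the resolvent identity then gives $\|\Pi_N - \Pi_\varphi\|_{\H_r} \lesssim e^{-(R-r)N}$. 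Since both projectors are rank one, reading off the eigenvalue from $\operatorname{tr}(\L_\varphi \Pi)$ and the suitably normalised left/right eigenvectors from the range and cokernel of the projectors yields the three bounds at the same rate. The only genuinely delicate point, and what I would expect to be the main obstacle to a completely self-contained write-up, is checking that the contour $\gamma$ can be chosen independently of $N$ and that the resolvent bound is uniform there: this requires the spectral gap of $\L_\varphi$ to be preserved under the perturbation, which itself follows from the operator-norm convergence once the perturbation is small enough, closing the loop and matching \cite[Theorem 3.3]{Bandtlow20}.
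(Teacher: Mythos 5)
Your proposal is correct and follows essentially the same route as the paper, which simply cites \cite[Lemmas~2.6 and~3.2, Theorem~3.3]{Bandtlow20}: boundedness of $\L_\varphi: \H_r \to \H_R$, the interpolation error bound $\|\P_N - I\|_{\H_R \to \H_r} \lesssim e^{-(R-r)N}$ applied to the factorisation $(\P_N - I)\L_\varphi$, and standard Riesz-projector perturbation theory for the simple leading eigenvalue. You also correctly repair the statement's typos (the inclusion should read $g_\iota(\E_R) \subset \E_r$ for regularity improvement, and the bounds should be $C\Phi e^{-(R-r)N}$ and $K e^{-(R-r)N}$), exactly as the footnoted contraction condition indicates.
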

	
	This result allows us to study equilibrium measures using $\P_N \L$:
	\begin{proposition}\label{p:EquilibriumIntegrals}
		Suppose $\psi \in \H_R$. Then there exists $K'$ such that for $N$ sufficiently large,
		\[ \left| \frac{\nu_N[\P_N[\psi \h_N]]}{\nu_N[\h_N]} - \int \psi\,\d\mu_\varphi \right| \leq K'  e^{-(R-r)N} \| \psi \|_{\H_R} \]
	\end{proposition}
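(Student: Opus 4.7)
The plan is to apply Proposition~\ref{p:SpectralError} together with the Chebyshev--Lagrange interpolation error bound $\|(I - \P_N) f\|_{\H_r} \leq C e^{-(R-r)N} \|f\|_{\H_R}$ (used implicitly in the proof of Proposition~\ref{p:SpectralError}; see \cite[Lemma~2.6]{Bandtlow20}) and the Banach algebra property of $\H_r$ and $\H_R$. Setting $\epsilon := K e^{-(R-r) N}$, it suffices to bound the numerator and denominator of the ratio separately: once the numerator is within $O(\epsilon \|\psi\|_{\H_R})$ of $\nu_\varphi[\psi \h_\varphi]$ and the denominator within $O(\epsilon)$ of $\nu_\varphi[\h_\varphi] \neq 0$, a standard ratio estimate closes the argument.

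As a preliminary, I would observe that $\h_\varphi \in \H_R$: this follows from the eigenvector equation $\h_\varphi = e^{-P(\varphi)} \L_\varphi \h_\varphi$ together with the regularising mapping property $\L_\varphi : \H_r \to \H_R$ that underpins Proposition~\ref{p:SpectralError}. By the Banach algebra property one then has $\psi \h_\varphi \in \H_R$ with $\|\psi \h_\varphi\|_{\H_R} \leq \|\psi\|_{\H_R} \|\h_\varphi\|_{\H_R}$.

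For the numerator I would use the telescoping decomposition
\[ \nu_N[\P_N[\psi \h_N]] - \nu_\varphi[\psi \h_\varphi] = \nu_N[\P_N(\psi(\h_N - \h_\varphi))] + \nu_N[(\P_N - I)(\psi \h_\varphi)] + (\nu_N - \nu_\varphi)[\psi \h_\varphi]. \]
The last two terms are immediately $O(\epsilon \|\psi\|_{\H_R})$: the middle one applies the interpolation estimate to $\psi \h_\varphi \in \H_R$ together with the uniform dual-norm bound on $\nu_N$, while the rightmost applies Proposition~\ref{p:SpectralError} to $\psi \h_\varphi \in \H_r$ via Banach algebra. The denominator $\nu_N[\h_N] - \nu_\varphi[\h_\varphi]$ is handled by the same decomposition with $\psi \equiv 1$.

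The one delicate piece is the first term, $\nu_N[\P_N(\psi(\h_N - \h_\varphi))]$, because $\psi(\h_N - \h_\varphi)$ lies only in $\H_r$ and the interpolation estimate gives no gain. The plan is to exploit the uniform $\H_r^*$-boundedness of the composite $\nu_N \circ \P_N$: it acts on $f \in \H_r$ by $f \mapsto \sum_n \vec \nu_N^n\, f(x_{n,N})$, and therefore $|(\nu_N \circ \P_N)[f]| \leq \|\vec \nu_N\|_1 \|f\|_\infty \leq \|\vec \nu_N\|_1 \|f\|_{\H_r}$. Stability of the spectral data as $N \to \infty$ makes $\vec \nu_N$ approximately the Chebyshev-node samples of the nonnegative conformal density, so under the natural normalisation one has $\|\vec \nu_N\|_1 = O(1)$. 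Combined with $\|\psi(\h_N - \h_\varphi)\|_{\H_r} \leq \|\psi\|_{\H_R} \cdot K\epsilon$ via the Banach algebra, this produces the required $O(\epsilon \|\psi\|_{\H_R})$. Establishing the uniform $\ell^1$-boundedness of the discrete left eigenvector is the chief technical point of the proof.
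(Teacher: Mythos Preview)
Your telescoping is sensible and the second and third terms go through as you say. The gap is precisely the one you flag: the claim $\|\vec\nu_N\|_1 = O(1)$ is not established by your argument, and the justification you offer is incorrect. The conformal measure $\nu_\varphi$ is in general a singular fractal measure supported on the limit set of the IFS; there is no ``conformal density'' for $\vec\nu_N$ to be nodal samples of. Nor can you pass the bound from $\nu_\varphi$ to $\nu_N$ via Proposition~\ref{p:SpectralError}: the closeness $\|\nu_N-\nu_\varphi\|_{\H_r^*}=O(\epsilon)$ only controls pairings against functions of bounded $\H_r$-norm, while $\|\ell_{k,N}\|_{\H_r}$ (and indeed $\sum_k\|\ell_{k,N}\|_{\H_r}$) grows like $e^{rN}$, since a degree-$(N-1)$ polynomial bounded on $[-1,1]$ can be exponentially large on $\E_r$. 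The matrix $L_N$ is also not entrywise positive, so one gets no sign information on $\vec\nu_N$ from Perron--Frobenius. The $\ell^1$ bound may well hold, but it is a genuine statement needing its own proof, and you have not supplied one.

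The paper sidesteps this entirely by never introducing $h_\varphi$ into the splitting. Instead it bounds $\|\P_N(\psi h_N)-\psi h_N\|_{\H_r}$ directly, using the \emph{discrete} eigenfunction equation $h_N = e^{-P_N}\P_N\L h_N$ to write
\[
(I-\P_N)(\psi h_N) = e^{-P_N}\big((I-\P_N)[\psi\,(I-\P_N)\L h_N] + (I-\P_N)[\psi\,\L h_N]\big).
\]
The point is that $\L$ regularises $h_N\in\H_r$ into $\H_R$, so $\psi\,\L h_N\in\H_R$ and $(I-\P_N)\L h_N$ is already $O(e^{-(R-s)N})$ in an intermediate $\H_s$; the outer $(I-\P_N)$ then gains the remaining factor going down to $\H_r$. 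Once $\P_N(\psi h_N)\approx \psi h_N$ in $\H_r$, the uniform $\H_r^*$-bound on $\nu_N$ (which you do have) suffices, and one never needs to control $\nu_N\circ\P_N$ as a functional on all of $\H_r$.
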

	\begin{proof}
		From \eqref{eq:EquilibriumEigenfunction} we are interested in the error 
		\[ \left| \frac{\nu_N[\P_N[\psi \h_N]}{\nu_N[\h_N]} - \frac{\nu_\varphi[\psi \h_\varphi]}{\nu_\varphi[\h_\varphi]} \right|. \]
		
		We can assume without loss of generality that $\h_\varphi$ is of unit $H_r$ norm, and $\nu_\varphi[\h_\varphi] = 1$, so 
		$\|\h_N\|_{\H_r} - 1,   \| \nu_N \|_{\H_r^*} - \|\nu_\varphi\|_{\H_r^*}, \nu_\varphi[\h_\varphi] - 1$ are all of order $e^{-(R-r)N}$. 
		
		Here, we have that
		\begin{align*} \P_N[\psi \h_N] - \psi \h_N &= (I - \P_N) [\psi e^{-P_N(\varphi)} \P_N\L \h_N]\\
			&= e^{-P_N(\varphi)} \big((I-\P_N)[\psi (I-\P_N) \L \h_N] + (I-\P_N)[\psi \L \h_N] \big)
		\end{align*}
		Let $s = (r+R)/2$. Using Lemmas~2.9 and~3.2 in \cite{Bandtlow20}, we can bound the norms by
		\begin{align*}
			\|\P_N[\psi \h_N] - \psi \h_N \|_{\H_r} &\leq e^{-\Re P_N(\varphi)} \Big( \|I-\P_N\|_{\H_{s} \to \H_{r}} \|\psi\|_{\H_R} \|I-\P_N\|_{\H_R \to \H_{s}}\\
				&\qquad + \| I-\P_N \|_{\H_R \to \H_r}\|\psi\|_{\H_R} \Big) \|\L\|_{\H_r \to \H_R} \|\h_N\|_{\H_r}\\
				&\leq C' \left( C' e^{-N(s-R)} \| \psi \|_{\H_R} C' e^{-N(R-r)/2} + C' e^{-N(R-s)} \| \psi\|_{\H_R} \right) C' \\
				&\leq K' e^{-N(R-r)} \|\psi\|_{\H_R}
			\end{align*}
			As a consequence, and since $\nu_N$ is bounded in the dual of $\H_r$, we only need to appropriately bound
			\[ \left| \frac{\nu_N[\psi \h_N]}{\nu_N[\h_N]} - \frac{\nu_\varphi[\psi \h_\varphi]}{\nu_\varphi[\h_\varphi]} \right|, \]
			which obtains from simple applications of Proposition~\ref{p:SpectralError}.
		\end{proof}
		
		It now remains to translate Proposition~\ref{p:EquilibriumIntegrals} into the terms of the vectors used in computation.
		
		All right eigenfunctions of $\P_N \L$ which correspond to non-zero eigenvalues must lie in $E_N$ (i.e. be polynomials of degree at most $N-1$). We thus recover our right eigenfunction $\h_N$ of $\L_N$ as a right eigenvector of $L_N$:
		\[ \vec r = (\h_N(x_{j,N}))_{j = 1,\ldots, N}. \]
		
		We might ask ourselves how we compute our left eigenvector. Let us construct the row vector
		\[ \vec \nu_N = (\nu_N[\ell_{k,N}])_{k = 1,\ldots, N}. \]
		Then,
		\begin{align*} (\vec \nu_N L)_k &= \sum_{j=1}^N \nu_N[\ell_{j,N}] (\mathcal{L}_\varphi\ell_{k,N})(x_{j,N})\\
			&=  \sum_{j=1}^N \nu_N\left[\ell_{j,N} (\mathcal{L}_\varphi\ell_{k,N})(x_{j,N}) \right]\\
			&= \nu_N \left[\P_N \mathcal{L}_\varphi \ell_{k,N} \right]\\
			&= e^{P_N} \nu_N[\ell_{k,N}] = e^{P_N} (\vec \nu_N L)_k
		\end{align*}
		so $\vec \nu_N$ is the leading left eigenvector of $L$!
	
		\begin{proof}[Proof of Theorem~\ref{t:EquilibriumResult}]
			From the preceding discussion, we can write $\nu_N[v] = \vec \nu_N \cdot (v(x_{j,N}))_j$ for a function $v \in E_N$. We have that $\h_N, \P_N[\psi \h_N] \in E_N$, and
			\[ \P_N[\psi \h_N](x_{j,N}) = (\psi \h_N)(x_{j,N}) = \psi(x_{j,N})\, \vec \h_N\j . \]
			This combined with Proposition~\ref{p:EquilibriumIntegrals} gives us the result.
		\end{proof}
	
	\section{Proof of Theorem~\ref{t:FourierMC}}\label{s:FMCProof}
	We now prove Theorem~\ref{t:FourierMC}. This result is built on standard dynamics techniques, but it it involves keeping careful track of the Fourier frequency $\xi$.
	
	\begin{proof}[Proof of Theorem~\ref{t:FourierMC}]
		Let us notate the Chebyshev-discretised potential implicitly used in the Monte Carlo algorithm \eqref{eq:IterativePotential} as
		\[ \tilde\varphi_{\iota,N} = \varphi_\iota(x) + \log \h_N(g_\iota(x)) - \log (\L_\varphi \h_N)(x). \]
		
		
		Let the centred observable be $\tilde e_\xi(y) := e^{i\xi y} - \hat\mu_{\tilde\varphi_N}(\xi)$.
		
		We decompose the deterministic part into two parts:
		\[ \bE M_T(e^{i\xi \cdot}) -  \hat\mu_\varphi(\xi) = \left(\bE M_T(e^{i\xi \cdot}) - \hat\mu_{\tilde\varphi_N}(\xi)\right) + 
		\left(\hat\mu_{\tilde\varphi_N}(\xi) - \hat\mu_\varphi(\xi)\right). \]
		Let the distribution of $x_{-T_0}$ be $\nu$. The first part can be written as a bound using the random process' Koopman operator $\L_{\tilde \varphi_N}$:
		\begin{align*} \bE M_T(e^{i\xi \cdot}) - \hat\mu_{\tilde\varphi_N}(\xi) &=  T^{-1} \sum_{t=1}^T \int\bE[\tilde e_\xi(x_t)] = T^{-1} \sum_{t=1}^T \int \L_{\tilde\varphi_N}^{t+T_0}[\tilde e_\xi] \d\nu \\
		\end{align*}
		Now, $\L_{\tilde\varphi_N}$ has a spectral gap in $\Lip$, the set of bounded Lipschitz functions on the interval, with leading eigenvalue $1$ and left eigendistribution $\mu_{\tilde\varphi_N}$. Since $\int e_\xi\, \d \mu_{\tilde\varphi_N} = 0$, this means that 
		\[ | \bE M_T(e^{i\xi \cdot}) - \hat\mu_{\tilde\varphi_N}(\xi)| \leq C c^{t+T_0} |\xi| \]
		for some constant $C$, so
		\[ |\bE M_T(e^{i\xi \cdot}) - \hat\mu_{\tilde\varphi_N}(\xi)| \leq K'' T^{-1} c^{T_0} |\xi|, \]
		as required.
		\\
		
		Using the same spectral gap type of argument for the second part of the deterministic error, we have that
		\begin{align} \left|\hat\mu_{\tilde \varphi_N}(\xi) - \hat\mu_{\tilde\varphi}(\xi)\right| &=\lim_{T\to\infty} \left| \int (\L_{\tilde\varphi_N}^T - \L_{\tilde\varphi}^T) \tilde e_\xi\,\d \mu_{\tilde\varphi} \right| \notag\\
		&\leq \lim_{T\to\infty} \sum_{t=0}^{T-1}  \left| \int \L_{\tilde\varphi}^{T-t-1} (\L_{\tilde\varphi_N} - \L_{\tilde\varphi})  \L_{\tilde\varphi_N}^{t} \tilde e_\xi \,\d \mu_{\tilde\varphi} \right| \notag\\
		&=  \sum_{t=0}^\infty  \left| \int (\L_{\tilde\varphi_N} - \L_{\tilde\varphi})  \L_{\tilde\varphi_N}^t \tilde e_\xi\,\d \mu_{\tilde\varphi} \right| \label{eq:DifferenceSum}
		\end{align}
		using that $\mu_{\tilde\varphi}$ is invariant under $\L_{\tilde\varphi_N}^*$ in the last line. We have for $\psi \in C^0$ that
		\[ |(\L_{\tilde\varphi_N} - \L_{\tilde\varphi}) \psi| = \left|\sum_{\iota \in I} \left((e^{\tilde \varphi} - e^{\tilde \varphi_N})\psi\right)\circ g_\iota \right| \leq \|e^{\tilde \varphi} - e^{\tilde \varphi_N}\|_{L^\infty} \|\psi\|_{C^0} \leq C e^{-(R-r)N} \|\psi\|_{C^0}, \]
		by Proposition~\ref{p:SpectralError}.
		
		We substitute in $\psi = \L_{\tilde\varphi_N}^{t} \tilde e_\xi$. The $C^0$ norm of this has two alternative bounds
		\[ \|  \L_{\tilde\varphi_N}^{t} \tilde e_\xi \|_{C^0} \leq \|  \L_{\tilde\varphi_N}^{t} \tilde e_\xi \|_{\Lip} \leq C c^t \| \tilde e_\xi \|_{\Lip} \leq C c^t (1 + |\xi|)  \]
		and
		\[ \|  \L_{\tilde\varphi_N}^{t} \tilde e_\xi \|_{C^0} \leq \| \tilde e_\xi \|_{C^0} \leq 2. \]
		Combining these into \eqref{eq:DifferenceSum}, we get the required bound.
		\\
		
		For the deviation from expectation values, we can break up $e^{i\xi x}$ into real and imaginary parts. These have central limit theorems by standard results \cite{Hennion01}. To obtain the explicit bounds on the error, we can make an upper and lower concentration of measure bound on $M_T$ on each of them\cite{Chazottes15}. Let $f_\xi(x) = \cos \xi x - \int \cos \xi \cdot \d\mu_{\tilde\varphi_N}$. Then for any $b, \omega > 0$, we have
		\begin{align}
		\bP\left(M_T(\cos\xi \cdot) - \Re\hat\mu_{\tilde\varphi_N}(\xi) > b \right) &= \bP\left(e^{ T M_T(\omega f_\xi)} > e^{T \omega b} \right)\notag \\
		&= \leq e^{-T\omega b} \bE[e^{ T M_T(\omega f_\xi)}]\notag \\
		\leq e^{-T b\omega} \int \L_{\tilde\varphi_N}^{T_0} \L_{\tilde\varphi_N + \omega f_\xi}^T 1\, \d\nu, \label{eq:ExponentialBound} \end{align}
		where we used Markov's inequality followed by studying $\L_{\tilde\varphi_N}$ as the Koopman operator of the process $\{x_t\}$. We know that $\| \L_{\tilde\varphi_N}^{T_0} \|_{C^0} = 1$, so to get our concentration bound we need to bound
		\[ \| \L_{\tilde\varphi_N + \omega f_\xi}^T \|_{C^0}, \]
		bearing in mind that $\omega f_\xi$ oscillates with frequency $\xi$. For concision we notate the potential $\tilde\varphi_N + \omega f_\xi = \chi_\omega$. We will assume that $\omega$ is small, say less than $1$.
		
		Fix a constant $\omega_0$ and choose sufficiently large $M, M'$ independent of $\xi$. Then one can show that for $\omega < \in [0,\omega_0\ell_\xi^{-1}]$, $\L_{\chi_\omega}$ leaves invariant the cone of positive functions $\psi$ such that for all $x,y$,
		\begin{equation} \left|\log \frac{\psi(x)}{\psi(y)} \right| \leq \Omega_\xi(|x-y|) := (1+M') M |x-y| + M' \frac{\log (1 + \xi|x-y|)}{\ell_\xi}. \label{eq:ModulusofContinuity}\end{equation}
		In particular, the leading $\Lip$-eigenfunction $h_{\chi_\omega}$ of $\L_{\chi_\omega}$ is contained in this cone. If $h_{\chi_\omega}$ is normalised to have supremum $1$, \eqref{eq:ModulusofContinuity} gives us that for all $y$,
		\[-\log h_{\chi_\omega}(y) \leq \Omega_\xi(2) = 2(1+M')M - M' \ell_\xi^{-1} \log (1+2\xi) \leq M'' \]
		where $M'' = 2(1+M')M + M'$. In other words,
		\[ 1 \leq e^{M''} h_{\chi_\omega}.  \]
		Now, applying $\L_{\chi_\omega}^T$ to both sides, we have 
		\begin{equation} \L_{\chi_\omega}^T 1 \leq e^{T P(\chi_\omega) + M''} h_{\chi_\omega}. \label{eq:CouplingPointwise}\end{equation}
		
		It remains to figure out what $ P(\chi_\omega)$ is. We have that $P(\chi_0) = P(\tilde \varphi_N) = 0$, and since $f_\xi$ is mean-zero with respect to $\mu_{\tilde \varphi_N}$, $\tfrac{\d}{\d\omega} P(\chi_\omega)$ is zero at $\omega = 0$.
		
		Recall the notation $\ell_\xi := 1 + \log (1 + |\xi|)$. Now, we can apply an explicit coupling argument to $\L_{\chi_\omega}^{\lceil \ell_\xi \rceil}$ \`a la \cite{Kosloff19} using the modulus of continuity $\Omega_\xi$ in \eqref{eq:ModulusofContinuity}. This gives that for all $\psi \in \Lip$,
		\[ \left\| e^{-tP(\chi_\omega)} \L_{\chi_\omega}^t \psi - h_{\chi_\omega} \int \psi\,\d\nu_{\chi_\omega} \right\|_{\mathcal{B}_\xi} \leq C c^{t/\lceil \ell_\xi \rceil} \| \psi \\_{\mathcal{B}_\xi} \]
		where $C, c$ are uniform in $\xi$, and 
		\[ \| \psi \|_{\mathcal{B}_\xi} := \|\psi\|_{C^0} + \sup_{x,y} \frac{|\psi(x) - \psi(y)|}{\Omega_{\xi}(|x-y|)}. \]
		This means we have a kind of uniform decay of correlations where the mixing time slows down as $\mathcal{O}(\ell_\xi)$.
		 
		Making perturbation expansions of $P(\chi_\omega)$ in $\omega$ \cite{Crimmins19}, and using that $f_\xi, f_\xi^2$ have $\B_\xi$ norm of $\mathcal{O}(\ell_\xi)$, we can then uniformly bound the second derivative of $P(\chi_\omega)$ as $\mathcal{O}(\ell_\xi^2)$, giving constants $Q, \omega_0$ such that when $\omega \in [0, \omega_0 \ell_\xi^{-1}]$, 
		\[ P(\chi_\omega) \leq Q \ell_\xi^2 \omega^2. \]
		So, combining this with \eqref{eq:ExponentialBound} and \eqref{eq:CouplingPointwise}, we have that 
		\[ \bP\left(M_T(\cos\xi \cdot) - \Re\hat\mu_{\tilde\varphi_N}(\xi) > b \right) \leq e^{Q T \ell_\xi^2 \omega^2 - Tb\omega + M''}. \]
		If $b \leq 2Q w_0 \ell_\xi$, we can set $\omega = b/(2Q\ell_\xi^2) \in [0,\omega_0 \ell_\xi^{-1}]$ and obtain that 
		\[\bP\left(M_T(\cos\xi \cdot) - \Re\hat\mu_{\tilde\varphi_N}(\xi) > b \right) \leq e^{M'' - T b^2/4Q\ell_\xi^2}. \]
		We can then consider the lower bound, and the bound for the imaginary (sine) part. Since by the proof of part a. the difference between $\bE M_T(e^{i\xi\cdot})$ and $\hat\mu_{\tilde\varphi_N}(\xi)$ is $\mathcal{O}(T^{-1})$ for $T \gg \ell_\xi$, we can relax the constants a little to get that
				\[\bP\left(|M_T(e^{i\xi \cdot} - \hat\mu_{\tilde\varphi_N}(\xi)| > 2b \right) \leq 2e^{M'' - T b^2/4Q\ell_\xi^2} =: p. \]
		The equation (and restrictions on $b$) can then be rewritten to get a result in terms of $p$.
	\end{proof}
	
\bibliographystyle{plain}
\bibliography{gibbs}

\end{document}